\newtheorem{theorem}{Theorem}[section]
\newtheorem{lemma}[theorem]{Lemma}
\newtheorem{corollary}[theorem]{Corollary}
\newtheorem{remark}[theorem]{Remark}
\newtheorem{definition}[theorem]{Definition}
\numberwithin{equation}{section}
\def\R{\Re e}
\newcommand{\Span}{\text{\rm Span\ }}
\begin{document}
\title[Efficient numerical stability analysis]{Efficient
numerical stability analysis of detonation waves in ZND}
\author{Jeffrey Humpherys and Kevin Zumbrun}

\date{\today}

\thanks{ This work was supported in part by the National Science Foundation award numbers DMS-0607721 and DMS-0300487, and National Science Fountation CAREER award DMS-0847074. Thanks to Mark Williams for stimulating discussions regarding
the numerical literature on stability of ZND detonations.}

\address{Department of Mathematics, Brigham Young University, Provo, UT 84602}
\email{jeffh@math.byu.edu}
\address{Department of Mathematics, Indiana University, Bloomington, IN 47402}
\email{kzumbrun@indiana.edu}

\begin{abstract}
As described 
in the classic works of
Lee--Stewart and Short--Stewart,
the numerical evaluation of linear stability of planar detonation
waves is a computationally
intensive problem of considerable interest in applications.
Reexamining this problem from a
modern numerical Evans function point of view,
we derive a new algorithm for their stability analysis,
related to 
a much older method of Erpenbeck,
that, while equally simple and easy to implement as the
standard method introduced by Lee--Stewart, 
appears to be potentially faster and more stable.
\end{abstract}

\maketitle


\section{Introduction}

As described for example in \cite{Er2,Er3,FD,LS,ShS,SK},
the numerical stability analysis of detonation wave solutions
of the Zeldovich--von Neumann--D\"oring (ZND), or reactive Euler equations,
is a rich and computationally challenging problem. 
Planar detonation waves can often change stability as physical parameters
are varied, undergoing interesting bifurcations
to pulsating, spinning, and cellular solutions
\cite{BMR,FW,AT,KaS,LZ1,JLW,TZ3,TZ4,TZ5}.
This motivates the numerical study of their stability,
originated by Erpenbeck in \cite{Er2,Er3}, both for its
interest in its own right and as a benchmark for more general 
time-evolution codes \cite{BMR,SK}.

Due both to the number of physical parameters
(four for a polytropic gas\footnote{
Gas constant $\Gamma=\gamma-1$, heat release coefficient $q$, 
activation energy $E_A$, and detonation amplitude \cite{Er2,LS,Z6}.})
and the difficulty of individual computations,
this problem has proven to be numerically intensive.
In their classical 1990 paper \cite{LS}, in which they
introduced the algorithm that has become the modern-day standard,
computing accurately for the first time
the stability boundaries for one-dimensional detonations, 
Lee and Stewart conclude (p. 131 of the reference):
``Finally, we point out that though our scheme is direct and easy 
to implement, complete investigation of the various regions of
parameter space is computationally intensive.
Any equivalent or more efficient numerical method should be considered
a valuable contribution and such approaches are needed to further 
explore the parameter regimes of instability.''

Despite these comments, the basic algorithm introduced by Lee-Stewart (or perhaps variants thereof) as described in the 2006 survey \cite{SK}  appears still to be the current state of the art.
Of course, computational power has increased tremendously in the interim, making once-prohibitive computations now accessible.  Nonetheless, it seems of interest to explore more efficient algorithms if they can be found.

In particular, the computations of \cite{LS} were carried out 
in 1990 on a Cray X-MP/48 supercomputer,\footnote{A Cray X-MP/48 cost roughly \$15-20M dollars in the mid-1980's, having 2 processors with a 105 MHz clock speed and a theoretical peak performance about 200 MFLOPS per processor or 400 MFLOPS total.} with several hours required to produce individual figures.
(For example, Fig. 9 of \cite{LS} tracking the top $6$ unstable eigenvalues of detonations of a polytropic gas with gas constant $\gamma=1.2$ as activation energy is varied was reported to require $5$ hours of computation.)
Today, substantially more computing power is available in a standard desktop PC, and a relatively inexpensive multi-core workstation offers substantially more.\footnote{A 2010 Mac Pro $8$-core (2 quad-core Xeon processors) for example is a \$4-5k system with a 2.5GHz clock speed and a theoretical peak performance around 10 GFLOPS per core or 80 GFLOPS total. Hence, it has roughly $200$ times the processing power at a five thousandth the price (not even adjusting for inflation).}
Hence, the challenge is transposed from the level of the national lab to
the level of individual users, and from feasibility to practical ease of use.
However, the impetus is no less real to reduce computation time from
hours to the minutes required for interactive numerical explorations,
and such improvement would undoubtedly lead to further advances in our
understanding of detonation phenomena.

Meanwhile, in parallel development, there has been considerable 
activity, centered around the {\it Evans function} \cite{AGJ,PW,GZ}, 
in the numerical evaluation of stability of viscous shock
waves and other traveling front or pulse and boundary layer solutions 
arising in a variety of equations
\cite{Br1,Br2,BrZ,BDG,HuZ2,BHRZ,HLZ,HLyZ,BHZ,BLZ,BLeZ,CHNZ,BSYZ},
some of which problems- see, e.g., \cite{HLyZ,BHZ,BLZ,CHNZ} exhibit
complexity rivalling that of detonations.
The authors and collaborators have developed a general model-independent  method and set of numerical principles for the treatment of such problems \cite{HuZ2,Z5}, encoded in the MATLAB-based platform STABLAB \cite{STABLAB}, which performs extremely well on all of the above-described applications.

At the same time, there has been a successful push to place
detonation stability in a common 
framework with stability of shock waves \cite{Z1,LZ1,LZ2,JLW,TZ5,Z3,Z6}.
In particular, in \cite{Z1,JLW,TZ5,Z3,Z6}, 
the determination of stability of both
viscous (reactive Navier--Stokes) and inviscid (reactive Euler or ZND) 
detonations has been reduced to the computation of an Evans function 
defined exactly as in the viscous shock and other cases described above.
Thus, it is a natural step to study ZND stability within this common
framework, using the general tools of \cite{HuZ2,Z5}.

In this paper, we do exactly that, proposing a new algorithm for
the numerical determination of stability of ZND detonations derived
from the point of view of \cite{HuZ2,Z5}.
{\it Surprisingly, though both are shooting methods,
this is quite different from the Lee-Stewart algorithm
currently in standard use}, shooting from $x=-\infty$
to $x=0$ rather than from $x=0$ to $x=-\infty$ as in \cite{LS};
indeed, it is more closely related to the original algorithm
of Erpenbeck \cite{Er3}.
The precise relations between the various
methods are described in Section \ref{s:relations}.

The advantage of shooting from $-\infty$ to $0$ is that we seek
generalized eigenfunctions decaying exponentially at $-\infty$.
Thus, in the forward direction ($-\infty \to 0$), the desired
solution grows exponentially, while error modes are exponentially
damped.  By contrast, integrating in the backward direction
($0\to -\infty$), the desired solution decays exponentially while
error modes are exponentially {\it amplified}, a numerically undesirable
situation
(``numerical pitfall 1'' of \cite{Z5}).
For this reason, we expect that our algorithm should be faster and
better conditioned than the Lee-Stewart algorithm currently in use.
However, there are other aspects that cloud the issue, in particular
the singular perturbation structure that arises in the
high-activation energy or ``square-wave'' limit in which instabilities
are often studied \cite{Er2,FD,FW,BN,AT}.
For this reason, careful comparison of methods in physically relevant
regimes is an important step before making conclusions.

In the present paper, we introduce the algorithm, and give some
supporting numerical experiments for a simple model equation 
indicating the advantages of our approach.
Followup work in \cite{BZ,BZ2} indicates that, also in
physically realistic settings, the algorithm performs favorably
compared to the current standard.  Specifically, the standard
adaptive-mesh version of the algorithm described here appears
to outperform the fixed-mesh algorithm described in \cite{LS,SK} 
by $2$-$3$ orders of magnitude.
Much of this improvement appears to be due to the difference between
fixed and adaptive mesh. 
However, even compared to an adaptive-mesh version
of the method of Lee-Stewart, our algorithm appears to be 
$1$-$10$ times faster, depending on the parameter regime: at the
least, it is equivalent, and in some situations substantially more
efficient.

\medskip

{\bf Plan of the paper.}
In Section \ref{s:ZND}, we review the ZND equations and detonation
structure.
In Section \ref{s:linearized}, we give a simple derivation
of the Evans/Lopatinski function condition for detonation stability from
a general point of view following \cite{Z1,JLW}.
For clarity, we specialize in most of the discussion to the
single-species, ideal gas case with Arrhenius ignition dynamics, 
working in the same framework as in \cite{LS}.
The general case is discussed briefly in Remark \ref{general}.
In Section \ref{s:relations}, we determine the relation between
the derived Evans/Lopatinski condition the related
stability determinants of Erpenbeck \cite{Er2} and Lee-Stewart \cite{LS}.
In Section \ref{s:algorithm}, we describe a proposed numerical
implementation within the standard STABLAB package developed
by the authors and collaborators.
Finally, in Section \ref{s:simplemodel},
we present numerical experiments for a simple model indicating the
advantages of integrating in the forward direction and factoring
out expected decay at $-\infty$ as prescribed in \cite{HuZ2,Z5}.


%

\section{ZND detonations}\label{s:ZND}

\subsection{The model}\label{model}
In Eulerian coordinates the Zeldovich--von Neumann--D\"oring (ZND)
equations of reacting gas dynamics in one space dimension
may be written as
\begin{equation}\label{znd}
\begin{aligned}
\rho_t + (\rho u)_x & = 0 \\
(\rho u)_t + (\rho u^2+p)_x & = 0 \\
(\rho \tilde{E})_t+((\rho \tilde{E}+p)u)_x & = 0\\
(\rho Y)_t+(\rho uY)_x & =  -\rho \varphi(T) K Y, \\
\end{aligned}
\end{equation}
where $\rho$, $u$, $p$, $\tilde E$, $T\in \mathbb{R}^1$ 
represent density, 
velocity, pressure, total energy, and temperature, and 
$Y=(Y_1, \dots, Y_r)\in \mathbb{R}^r$ the mass fractions of 
reactants.\footnote{ Alternatively, the equations may 
be written in terms of progress variables $\lambda_j=1-Y_j$ \cite{FD,LS,LZ2}.} 
Here, $\tilde E=u^2/2+\tilde e$ is the
non-reacting gas-dynamical energy 
$E=u^2/2+e$ modified by chemical potential according to
\[
\tilde e= e+qY,
\]
where $e$ is the specific internal energy of the gas and $qY$ is the
specific chemical energy.
The matrix $K\in \mathbb{R}^{r\times r}$ and vector $q\in \mathbb{R}^{1\times r}$ 
measure the rates of reaction and the heat 
released in reaction, respectively, and $\varphi$ is an ``ignition function''
that is positive for $T$ above some ignition temperature $T_i$ and
zero for $T\le T_i$, serving to ``turn on'' the reaction.
The matrix $-K$ is assumed to be stable, i.e., to have spectrum
of strictly negative real part, so that reaction in a quiescent
flow indeed proceeds to the completely burned state $Y=0$.
In the simplest case of a single-species, exothermic reaction,
$Y\in \mathbb{R}^1$ is a scalar, and $K$ and $q$ are positive constants.

The system is closed by specifying equations of state (i.e.,
thermodynamic relations) $p=p(\rho , e, Y)$ and $T=T(\rho,e,Y)$
and the ignition function.  
Standard assumptions (in particular, the ones made in \cite{LS}, etc.)
are the ideal gas laws
\begin{equation}
\label{ideal}
p(\rho,e)=\Gamma\rho e,
\quad T(e)= e/C_v,
\end{equation}
where $\Gamma$, $C_v>0$ are constants determined by the nature of the gas,
and the modified Arrhenius law
\begin{equation}\label{Arrhenius}
\varphi(T)=\exp\left(-\frac{E_A}{RT}\right)\beta(T),
\end{equation}
where $E_A$ is the activation energy, $R =\gamma C_V$ is the gas constant,
and $\beta$ is an artificial smooth cutoff function with
the property that $\beta\equiv 1$ for $T\ge T^i$ and $\beta\equiv 0$
for $T\le T_i$.\footnote{The latter, standard modification 
circumvents the ``cold-boundary difficulty'' that the unburned state
$Y\equiv 1$ is not an equilibrium for the exact Arrhenius law $\beta\equiv 1$,
and so steady traveling detonation waves do not exist.
Though not mentioned, this assumption is also made implicitly 
in \cite{LS}, etc.}
Under usual assumptions, the specific form of the function $\beta$
plays no role in the analysis; see Remark \ref{beta}.


\begin{remark}\label{rate}
More realistic rate laws $r(\rho, T, Y)$ may be
considered in place of the linear law $r=-\rho \varphi(T)K Y$
with little additional difficulty \cite{LS}; however,
we lose the explicit form of the reaction profile \eqref{Yform}
computed in Section \ref{init}.
In the single-species case, these are equivalent.
\end{remark}
\subsection{Alternative formulation}\label{form}

Subtracting $q$ times the fourth equation of \eqref{znd}
from the third equation, we obtain the alternative formulation
\begin{equation}\label{altznd}
\begin{aligned}
\rho_t + (\rho u)_x & = 0 \\
(\rho u)_t + (\rho u^2+p)_x & = 0 \\
(\rho {E})_t+((\rho {E}+p)u)_x & = \rho q \varphi(T) K Y \\
(\rho Y)_t+(\rho uY)_x & =  -\rho \varphi(T) K Y \\
\end{aligned}
\end{equation}
in terms of the usual gas-dynamical variables $\rho$, $u$, $E$.
We alternate between the two formulations as convenient for the analysis.

\subsection{Detonation waves}\label{detonation}

For temperatures $T\le T_i$ below igition level, equations
\eqref{znd}
evidently reduces to the usual 
Euler equations of nonreactive gas dynamics, with the reactants $Y$ 
convected passively by the velocity field $u$.
In particular, so long as $T(\rho_\pm, e_\pm, Y_0)\le T_i$,
they support as traveling-wave solutions ordinary
gas-dynamical shock waves
\[
(\rho, u, E, Y)(x-st)=
\begin{cases}
(\rho_+, u_+, E_+, Y_0) & x-st >0\\
(\rho_-, u_-, E_-, Y_0) & x-st \le 0\\
\end{cases}
\]
satisfying the Rankine--Hugoniot conditions
\begin{equation}\label{RHprelim}
s[\rho]=[\rho u], \quad
s[\rho u]=[\rho u^2+p],\quad
s[\rho E]= [(\rho {E}+p)u],
\quad [Y]=0,
\end{equation}
or, equivalently,
\begin{equation}\label{RH}
s[\rho]=[\rho u], \quad
s[\rho u]=[\rho u^2+p],\quad
s[\rho \tilde E]= [(\rho {\tilde E}+p)u],
\quad [Y]=0,
\end{equation}
where for an arbitrary function $h(\rho,u,\tilde E,Y)$, 
$[h]:=h(\rho_+, u_+,\tilde E_+, Y_+) - h(\rho_-,u_-, \tilde E_-, Y_-)$ 
denotes jump across the discontinuity.  This also holds if there is no reactant, $Y_0=(0,\dots,0)$.

If, on the other hand, $Y_0 \ne (0,\dots,0)$, and $T_+\le T_i$
but $T_-\ge T_i$, with $u_\pm <s$ 
(alternatively, $T_+\ge T_i$ and $T_-\le T_i$, with $u_\pm>s$),
then there appears a different type of traveling-wave solution
known as a {\it strong detonation}, given by ($z=x-st$)
\begin{equation}\label{deteq}
(\rho, u, E, Y)(z)=
\begin{cases}
(\rho_+, u_+, E_+, Y_0) & z >0\\
(\bar \rho, \bar u, \bar E, \bar Y)(z) & z \le 0,\\
\end{cases}
\end{equation}
where $\bar Y(z)$ satisfies the smooth traveling-profile ODE
\begin{equation}\label{preY}
(\bar \rho (\bar u-s) \bar Y)'= -\bar \rho \varphi(\bar T) K \bar Y 
\end{equation}
on $(-\infty,0]$, with initial condition $\bar Y(0)=Y_0$, decaying
to the completely burned state $(0,\dots,0)$ as $z\to -\infty$,
with $(\bar \rho, \bar u, \bar E)= (\bar \rho, \bar u, \bar E)(\bar Y)$
determined through the generalized Rankine--Hugoniot relations
\begin{equation}\label{gRH}
\begin{aligned}
s\bar \rho- \bar \rho \bar u&= (s \rho- \rho  u)_\pm \\
s\bar \rho \bar u- (\bar \rho \bar u^2+\bar p)&= (s\rho u- (\rho u^2+p))_\pm \\
s\bar \rho \bar {\tilde E}-  (\bar \rho {\bar {\tilde E}}+\bar p)\bar u 
&=(s\rho {\tilde E}-  (\rho {{\tilde E}}+p)u)_\pm
\end{aligned}
\end{equation}
obtained by integrating the remaining traveling-profile equations
\begin{equation}\label{otherprofile}
\begin{aligned}
(s\bar \rho- \bar \rho \bar u)'&=0\\
(s\bar \rho \bar u- (\bar \rho \bar u^2+\bar p))'&=0\\
(s\bar \rho \bar {\tilde E}-  (\bar \rho {\bar {\tilde E}}+\bar p)\bar u)' 
&=0\\
\end{aligned}
\end{equation}
from $0$ to $z$ (where $z<0$) and recalling the Rankine--Hugoniot conditions
\eqref{RH} satisfied across the jump at $z=0$.

That is, strong detonations moving to the right with respect to fluid
velocity $u$ (i.e., $u<s$, where $s$ is the speed of the detonation)
have the structure of an initiating gas-dynamical shock
called the Neumann shock, 
which rapidly compresses the gas, 
raising temperature to the point of ignition, 
followed by a reaction zone (the profile
$(\bar \rho, \bar u, \bar E, \bar Y)$) resolving to the 
final burned state.  
This characteristic ``detonation spike'' 
in temperature and pressure profiles agrees well with observed 
features in laboratory experiments. 

Substituting into \eqref{preY} the first relation in \eqref{gRH}
and introducing the constant 
$ m:= (\rho (s-u))_\pm$, we obtain the simplified
reaction equation
\begin{equation}\label{Yprof}
 Y'= m^{-1}\rho \varphi(T) K Y
\end{equation}
that we will actually use to solve for the profile.  Further simplifying \eqref{gRH}, we obtain
\begin{equation}\label{finalgRH}
\begin{aligned}
s\bar \rho- \bar \rho \bar u&=(s \rho- \rho  u)_\pm \\
s\bar \rho \bar u- (\bar \rho \bar u^2+\bar p)&=(s\rho u- (\rho u^2+p))_\pm \\
s\bar \rho \bar {E}-  (\bar \rho {\bar {E}}+\bar p)\bar u + mq\bar Y &= (s\rho {E}-  (\rho {{E}}+p)u +mqY)_\pm.
\end{aligned}
\end{equation}

An application of the Implicit Function Theorem reveals that \eqref{gRH} 
(as, likewise, the original ODE \eqref{otherprofile})
may be solved for $(\bar \rho, \bar u, \bar E)$ in
terms of $\bar Y$ so long as the gas-dynamical state
$(\bar \rho, \bar u, \bar E)$ remains noncharacteristic with
respect to speed $s$, or, equivalently, the Rankine--Hugoniot relation
\eqref{finalgRH} remains full rank in $(\bar \rho, \bar u, \bar E)$.
For typical reactions and equations of state, in particular ideal gas dynamics
with single exothermic reaction,
this condition holds for all solutions of \eqref{finalgRH}
with $\bar Y_j\ge 0$, except for special limiting values of $s$ for
which the asymptotic state $\bar Y=0$ is characteristic, or ``sonic'';
see, e.g., \cite{LZ1}.
These limiting, characteristic waves are called {\it Chapman--Jouget}
detonations, and have a special place in the theory.
The usual, noncharacteristic type are called {\it overdriven} detonations.

For our present purposes, the main import of characteristicity is that
the eigenvalue equation becomes singular at $x\to -\infty$ in the
coordinates we use here, complicating the discussion.
For simplicity, we restrict hereafter to the overdriven case.
The Chapman--Jouget case may be treated similarly using ideas
of \cite{LS}; see Remark \ref{general}.

\begin{remark}\label{beta}
For the modified Arrhenius ignition function \eqref{Arrhenius}, 
a standard assumption is that $\bar T\ge T^i$, all $x\le 0$,
$T_+\le T_i$, so that $\beta\equiv 1$ for $x\le 0$ and
$\beta\equiv 0$ for $x\ge 0$.  
Under this assumption, the specific form of the
cutoff $\beta$ plays no role in the analysis.
\end{remark}
\bigskip


\section{Linearized stability analysis: the Evans--Lopatinski determinant}\label{s:linearized}

We now carry out a linearized interface analysis, loosely
following \cite{JLW}.\footnote{See also the related \cite{Z1,LZ1,LZ2},
and the original treatments in \cite{Er1,Er2,LS}, etc.} 
Setting $V:=(\rho, u, e)^T$, write \eqref{altznd} in abstract form as
\begin{equation}\label{abznd}
\begin{aligned}
F^0(W)_t + F^1(W)_x=R(W),
\end{aligned}
\end{equation}
$W$, $F^j$, $R\in \mathbb{R}^{3+r}$, where
\begin{equation}\label{abcoefs}
\begin{aligned}
W:=\begin{pmatrix} V\\Y \end{pmatrix}, \quad
F^j:=\begin{pmatrix} f^j(W)\\Y g^j(V) \end{pmatrix},\quad
R:=\begin{pmatrix} QKY\psi(W)\\-KY\psi(W) \end{pmatrix},\\
\end{aligned}
\end{equation}
\begin{equation}\label{abdefs}
\begin{aligned}
f^0&:=\begin{pmatrix} \rho\\\rho u\\ \rho (e+u^2/2) \end{pmatrix},\quad
f^1:=\begin{pmatrix} \rho u\\\rho u^2 + p(\rho,e,Y)\\ 
(\rho (e+u^2/2)+p(\rho,e,Y))u \end{pmatrix},\\
g^0&:=\rho , \quad g^1= \rho u,
\quad
Q:=\begin{pmatrix} 0& \cdots& 0 \\q_1& \cdots & q_r \end{pmatrix}, \quad
\psi:= \rho \phi(T(\rho,e,Y)).\\
\end{aligned}
\end{equation}
with $V$, $f^j\in \mathbb{R}^{3}$, $Y\in \mathbb{R}^r$, 
$g^j$, $\psi \in \mathbb{R}^1$, $Q\in \mathbb{R}^{3\times r}$.

\begin{remark}\label{Yeos}
A minor departure from \cite{Er1,Er2,Z1,JLW} is to admit the possible 
dependence of pressure and temperature on chemical makeup of the gas $(Y)$, 
an important feature in realistic modeling of reactive flow.
\end{remark}

To investigate solutions in the vicinity of a discontinuous 
detonation profile, we postulate existence of a single shock
discontinuity at location $X(t)$, and reduce to a fixed-boundary
problem by the change of variables $x\to x-X(t)$.
In the new coordinates, the problem becomes
\begin{equation}\label{transformed}
\begin{aligned}
F^0(W)_t + (F^1(W) - X'(t) F^0(W))_x=R(W), \quad x\ne 0,
\end{aligned}
\end{equation}
with jump condition
\begin{equation}\label{transformedRH}
\begin{aligned}
X'(t)[F^0(W)] - [F^1(W)]=0, 
\end{aligned}
\end{equation}
$[h(x,t)]:=h(0^+,t)-h(0^-,t)$ as usual denoting jump 
across the discontinuity at $x=0$.

\subsection{Linearization}\label{linearization}
Without loss of generality, suppose for simplicity 
that the background profile $\bar W$ 
is a {\it steady} detonation, i.e., $s=0$, hence 
$(\bar W, \bar X)=(\bar W,0)$ is also a steady solution of 
\eqref{transformed}--\eqref{transformedRH}.
Linearizing \eqref{transformed}--\eqref{transformedRH} about
the solution $(\bar W,0)$, we obtain the {\it linearized equations} 
\begin{equation}\label{lin}
\begin{aligned}
A^0(W_t - X'(t)\bar W'(x)) + (A^1W)_x =CW,
\end{aligned}
\end{equation}
\begin{equation}\label{linRH}
\begin{aligned}
X'(t)[F^0(\bar W)] - [A^1 W]=0, \quad x=0,
\end{aligned}
\end{equation}
\begin{equation}\label{coeffs}
A^j:= (\partial/\partial W)F^j,
\quad
C:= (\partial/\partial W)R.
\end{equation}

\medskip
\subsection{Reduction to homogeneous form}\label{homogeneous}

As pointed out in \cite{JLW}, it is convenient for the stability
analysis to eliminate the front from the interior equation \eqref{lin}.
Therefore, we reverse the original transformation to linear order
by the change of dependent variables
\begin{equation}\label{wsharp}
W\to W- X(t)\bar W'(x),
\end{equation}
following the calculation 
\[
W(x-X(t),t))-W(x,t)\sim X(t)W_x(x,t)
\sim X(t) \bar W'(x).
\]
approximating to linear order the original, nonlinear transformation.
Substituting \eqref{wsharp} in \eqref{lin}--\eqref{linRH}, and
noting that $x$-differentiation of 
the steady profile equation $F^1(\bar W)_x=R(\bar W)$ gives
\begin{equation}\label{cancel}
(A^1(\bar W)\bar W'(x))_x=C(\bar W)\bar W'(x),
\end{equation}
we obtain modified, {\it homogeneous} interior equations
\begin{equation}\label{flin}
\begin{aligned}
A^0W_t + (A^1W)_x =CW
\end{aligned}
\end{equation}
agreeing with those that would be obtained by a naive
calculation without consideration of the front,
together with the modified jump condition
\begin{equation}\label{flinRH}
\begin{aligned}
X'(t)[F^0(\bar W)]-X(t)[A^1 \bar W'(x)] - [A^1 W]=0 
\end{aligned}
\end{equation}
correctly accounting for front dynamics.

The reduction to homogeneous interior equations 
puts the linearized problem in a standard linear
boundary-value-problem format for which stability may
be investigated in straightforward fashion by the construction of an 
{\it Evans/Lopatinski determinant}.
Besides simplifying 
considerably 
Erpenbeck's original derivation
of his equivalent {\it stability function} \cite{Er2}, the
homogeneous format makes possible the application of standard
numerical Evans function techniques for its evaluation.
This useful reduction was first carried out, in slightly
different form, in \cite{JLW}.
The transformation \eqref{wsharp} is of general use in interface
problems, comprising the ``good unknown'' of Alinhac \cite{Al}.
A similar discussion in the simpler context of shock waves
may be found in \cite{Fo}; however, in this case,  $\bar W'(x)\equiv 0$, 
and so the transformation \eqref{wsharp} does not make itself
evident, nor do front dynamics modify \eqref{flinRH}.

\medskip

\subsection{The stability determinant}
Seeking normal mode solutions $W(x,t)=e^{\lambda t}W(x)$,
$X(t)=e^{\lambda t}X$,
$W$ bounded, of the linearized equations \eqref{flin}--\eqref{flinRH},
we are led to the generalized eigenvalue equations
\[
(A^1W)' = (-\lambda A^0  + C)W, \quad x\ne 0,
\]
\[
X(\lambda [F^0(\bar W)]-[A^1 \bar W'(x)]) - [A^1 W]=0, 
\]
where ``$\prime$'' denotes $d/dx$, or, setting $Z:=A^1W$, to
\begin{equation}\label{eig}
\begin{aligned}
Z' = GZ, \quad x\ne 0,
\end{aligned}
\end{equation}
\begin{equation}\label{eigRH}
\begin{aligned}
X(\lambda [F^0(\bar W)]-[A^1 \bar W'(x)]) - [Z]=0, 
\end{aligned}
\end{equation}
with
\begin{equation}\label{G0}
G:=(-\lambda A^0  + C)(A^1)^{-1}.
\end{equation}
\medskip

Here, we are implicitly using the following elementary observation.

\begin{lemma}\label{ainvert}
$A^1(\bar W(x))$ is invertible for all $x$ such that $\partial f/\partial V$
is invertible (i.e. $V$ is noncharacteristic as a gas-dynamical state
with $Y$ held fixed).
\end{lemma}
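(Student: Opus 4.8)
The plan is to compute $\det A^{1}$ outright, exploiting the block structure that $A^{1}=(\partial/\partial W)F^{1}$ inherits from the decomposition $W=(V,Y)^{T}$, $F^{1}=(f^{1}(W),\,Y g^{1}(V))^{T}$ of \eqref{abcoefs}--\eqref{abdefs}. Writing $A^{1}$ in $(3+r)$--block form, with the $V$--variables on top and the $Y$--variables below (and using $\partial_{V},\partial_{Y}$ for the corresponding derivative blocks), I get
\[
A^{1}=\begin{pmatrix} \partial_{V}f^{1} & \partial_{Y}f^{1}\\ Y\,(\partial_{V}g^{1}) & g^{1}I_{r}\end{pmatrix},
\]
since $g^{1}=\rho u$ does not depend on $Y$, so that $\partial_{Y}(g^{1}Y)=g^{1}I_{r}$, while $\partial_{V}(g^{1}Y)=Y\,(\partial_{V}g^{1})$ has rank one.

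The key observation I would then record is that the first component of $f^{1}$ is exactly $g^{1}$, i.e.\ $f^{1}_{1}=\rho u=g^{1}$. Two consequences follow: the row $\partial_{V}g^{1}$ in the lower-left block of $A^{1}$ coincides with the first row of $\partial_{V}f^{1}$, and $f^{1}_{1}$ is independent of $Y$, so the first row of $\partial_{Y}f^{1}$ vanishes. Hence, subtracting $Y_{j}$ times the first row of $A^{1}$ from its $(3+j)$-th row for $j=1,\dots,r$ — determinant-preserving row operations — annihilates the entire lower-left block while leaving the lower-right block $g^{1}I_{r}$ and the top three rows untouched, bringing $A^{1}$ to block upper-triangular form $\begin{pmatrix}\partial_{V}f^{1}&\partial_{Y}f^{1}\\0&g^{1}I_{r}\end{pmatrix}$, so that
\[
\det A^{1}=(g^{1})^{r}\,\det(\partial_{V}f^{1})=(\rho u)^{r}\,\det(\partial f/\partial V).
\]

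It then remains to note that $\rho u\neq0$ along the background profile, which makes the factor $(\rho u)^{r}$ harmless, so that $A^{1}$ is invertible exactly when $\partial f/\partial V$ is. For a steady overdriven detonation this is immediate: $\bar\rho\bar u$ is a first integral of the profile equations \eqref{otherprofile} (with $s=0$), equal to $-m=-(\rho(s-u))_{\pm}\neq0$ because the wave is a genuine detonation ($u_{\pm}\neq s$); alternatively, invertibility of $\partial f/\partial V$ already forces $u\neq0$, since the first and third rows of $\partial_{V}f^{1}$ become linearly dependent when $u=0$ (the gas-dynamical state being then characteristic of speed $0$). I do not anticipate a genuine obstacle — the proof is essentially this single computation — the only points needing care being the exact block form of $A^{1}$ (the scalar factor $g^{1}I_{r}$ and the rank-one block), the identification $g^{1}=f^{1}_{1}$, and checking that admitting $Y$-dependence of $p$ as in Remark \ref{Yeos} affects only $\partial_{V}f^{1}$ and $\partial_{Y}f^{1}$ and hence does not disturb the row reduction.
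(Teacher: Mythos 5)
Your proof is correct and follows essentially the same route as the paper: the row operation of subtracting $Y_j$ times the first row (which is $\partial_V g^1$ in the $V$-columns and zero in the $Y$-columns, since $f^1_1=\rho u=g^1$ is $Y$-independent) from the block $Y$-rows reduces $A^1$ to block upper-triangular form with diagonal blocks $\partial f/\partial V$ and $g^1 I_{r\times r}$, $g^1=\rho u\neq 0$, which is exactly the paper's argument. Your write-up simply makes explicit the details (the rank-one structure of the lower-left block, the vanishing of $\partial_Y f^1_1$, and why $\rho u\neq 0$ along the profile) that the paper leaves to the reader.
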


\begin{proof}
Similarly as in the discussion of existence of steady profiles,
we may by subtracting $Y$ times the first row of $A^1$ 
from the block $Y$-row, reduce $A^1$ to block upper-triangular
form, with diagonal blocks $\partial f/\partial V$ and
$g^1(V,Y)I_{r\times r}$ with $g^1(V,Y)=\rho u \ne 0$.
\end{proof}

\begin{remark}
As discussed in Section \ref{znd}, this assumption is
essentially necessary already for existence of a steady profile.
In particular, it is satisfied for the usual ideal gas equation of state.
%
\end{remark}

We require also the following fundamental properties. 

\begin{lemma}[\cite{Er1,Er2,JLW}] \label{lem:splitting} 
On $\R \lambda >0$, the limiting $(3+r)\times (3+r)$
coefficient matrices $G_\pm:= \lim_{z\to \pm \infty} G(z)$ 
have unstable subspaces of fixed rank: full rank $3+r$ for
$G_+$ and rank $2+r$ for $G_-$.
Moreover, these subspaces have continuous limits as $\R \lambda \to 0$.
\end{lemma}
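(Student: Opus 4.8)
The plan is to analyze the limiting matrices $G_\pm = (-\lambda A^0_\pm + C_\pm)(A^1_\pm)^{-1}$ directly by identifying them, up to the similarity transformation $Z = A^1 W$, with the coefficient matrices of the first-order system obtained from the linearized interior equations \eqref{flin} at the constant end states. At $z = +\infty$ the end state is the unburned state $(\rho_+, u_+, E_+, Y_0)$ with $T_+ \le T_i$; there the ignition function $\varphi$ vanishes, so $\psi \equiv 0$ in a neighborhood and hence $C_+ = (\partial/\partial W)R$ also vanishes. Thus $G_+ = -\lambda A^0_+ (A^1_+)^{-1}$, and the first step is to show that for $\R\lambda > 0$ this matrix is \emph{hyperbolic} (no center eigenvalues) with full-rank unstable subspace, i.e. \emph{all} $3+r$ eigenvalues have positive real part. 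Equivalently, $\mu A^1_+ + \lambda A^0_+$ should be invertible for all $\mu$ with $\R\mu \le 0$. This is the statement that the constant-coefficient hyperbolic operator $A^0_+ \partial_t + A^1_+ \partial_x$ (the linearization of the nonreactive Euler equations about a state moving supersonically relative to the shock, $u_+ > s = 0$ in our normalization, together with the passively convected reactant block) has no bounded spatial modes on the half-line — a classical computation using the symmetrizability and the sign of $u_\pm - s$. Here the hypothesis that $V$ is noncharacteristic (Lemma \ref{ainvert}) guarantees $A^1_+$ is invertible, and the supersonic sign condition forces the full spectrum of $G_+$ into $\R > 0$; the reactant block contributes eigenvalues $\lambda/(\rho_+ u_+)$, again in $\R > 0$.

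At $z = -\infty$ the end state is the completely burned state with $\bar Y = 0$, so $C_-$ is the Jacobian of $R$ at $Y = 0$; since $R = (QKY\psi, -KY\psi)^T$ is linear in $Y$ to leading order, $C_-$ has the block form with a nonzero lower-right block $-K\psi_-$ acting on the $Y$-coordinates and coupling into the gas block via $QK\psi_-$, but \emph{zero} derivative in the $V$-coordinates (since $R$ vanishes identically when $Y = 0$). The second step is then to read off the spectrum of $G_-$: the gas-dynamical sub-block behaves as at $+\infty$ but now with the \emph{subsonic/overdriven} sign $u_- < s = 0$ — wait, more precisely with $u_\pm < s$ so that $m = (\rho(s-u))_\pm > 0$ — giving one eigenvalue in $\R < 0$ (the slow acoustic/entropy mode appropriate to the burned state) and the remaining gas modes in $\R > 0$, while the reactant block contributes eigenvalues of the form (eigenvalues of $-K\psi_- / m$-type expressions shifted by $\lambda$), all of which lie in $\R > 0$ because $-K$ is assumed stable and $\R\lambda > 0$ (this is precisely where the standing assumption that $-K$ has spectrum of strictly negative real part enters). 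Counting, $G_-$ has unstable subspace of rank exactly $(3+r) - 1 = 2 + r$: the ``missing'' dimension is the single decaying gas-dynamical mode at $-\infty$, exactly as expected for a Lax-type (strong) detonation. Both counts are stable under perturbation of $\lambda$, which gives consistency of the rank on the open right half-plane.

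For the continuous-limit claim as $\R\lambda \to 0^+$, the plan is to exhibit for $\R\lambda \ge 0$, $\lambda \ne 0$, that no eigenvalue of $G_\pm$ crosses the imaginary axis — equivalently, using the hyperbolicity of the underlying PDE, that the symbol $i\xi A^1_\pm + \lambda A^0_\pm$ remains invertible for real $\xi$ when $\R\lambda \ge 0$, $\lambda\ne 0$, by the standard symmetrizer estimate $\R\langle (i\xi A^1 + \lambda A^0) W, S W\rangle \ge (\R\lambda)\|W\|^2_S > 0$ unless $\R\lambda = 0$ and $\xi$ is a characteristic value, which is excluded away from the finitely many glancing points; the reactant-block eigenvalues extend continuously since their real parts stay bounded away from $0$ by the spectral gap of $-K$. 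Hence the unstable subspaces of $G_\pm$ (spectral projections onto $\R > 0$) extend continuously to $\R\lambda = 0$, $\lambda \ne 0$, and by the same rank-constancy to $\lambda = 0$ after taking the appropriate one-sided limit along rays — or one simply invokes the general gap-lemma / conjugation framework of \cite{GZ,Z1,JLW} to which this reduces verbatim. The main obstacle I anticipate is organizing the eigenvalue count for $G_-$ cleanly: separating the genuinely coupled $(3+r)\times(3+r)$ matrix into its gas-dynamical and reactive parts requires the block-triangularization used in Lemma \ref{ainvert} together with care about how $C_-$ couples the blocks, and verifying that this coupling does not move any eigenvalue across the imaginary axis — this is where the bulk of the (routine but fiddly) linear algebra lives, and where one must use simultaneously the noncharacteristicity of the end states, the overdrive sign condition on $u_\pm - s$, and the stability of $-K$.
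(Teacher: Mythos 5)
Your proposal is correct and takes essentially the same route as the paper's (very terse) proof: exploit the block upper-triangular structure of $G_\pm$ in $(V,Y)$ (obtained via the row reduction of Lemma \ref{ainvert}, with $Y_-=0$ at $-\infty$) to reduce the eigenvalue count to the classical Lax-type count of gas-dynamical characteristic speeds relative to $s$ on the one hand, and to the reactant block governed by the stability of $-K$ together with $\R \lambda>0$ on the other, with the continuous extension to $\R\lambda=0$ following from the explicit eigenvalues. The only blemish is a sign-convention slip which you partially self-correct (the paper takes $u_\pm<s=0$, so the reactant-block eigenvalue at $+\infty$ is $-\lambda/u_+$ with $u_+<0$, hence of positive real part); this does not affect the conclusion.
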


\begin{proof}
Straightforward calculation using the fact that $G_\pm$
are block upper-triangular in $(V,Y)$;
see, e.g., \cite{Er1,Er2,Z1,JLW} in the case that $f$, $g$
depend only on $V$.
\end{proof}

\begin{corollary}[\cite{Z1,JLW}] \label{cor:splitting} 
On $\R \lambda >0$, the only bounded solution of \eqref{eig}
for $x>0$ is the trivial solution
 $W\equiv 0$.  For $x<0$, the bounded solutions consist of
an $(r+2)$-dimensional
subspace $\Span \{Z_1^+, \dots, Z_{r+2}^+\}(\lambda,x)$
of exponentially decaying solutions, analytic in $\lambda$ and
tangent as $x\to -\infty$ to the subspace of exponentially decaying
solutions of the limiting, constant-coefficient equations
$Z'=G_-Z$; moreover, this subspace has a continuous limit
as $\R \lambda \to 0$.
\end{corollary}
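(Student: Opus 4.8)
The plan is to read off the structure of the bounded solution spaces directly from Lemma~\ref{lem:splitting} together with the standard Gap/Conjugation Lemma apparatus for exponentially asymptotically constant-coefficient ODE. For $x>0$ the profile is exactly constant, so \eqref{eig} is the constant-coefficient system $Z'=G_+Z$; since by Lemma~\ref{lem:splitting} the unstable subspace of $G_+$ has full rank $3+r$ on $\R\lambda>0$, the matrix $G_+$ has \emph{no} center or stable subspace there, every nontrivial solution grows exponentially as $x\to+\infty$, and hence the only bounded solution on $(0,\infty)$ is $Z\equiv 0$ (equivalently $W\equiv 0$, since $A^1$ is invertible by Lemma~\ref{ainvert}). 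That disposes of the $x>0$ claim with essentially no work.

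For $x<0$ the substantive point is the $x\to-\infty$ limit. Here $G(z)\to G_-$ at an exponential rate as $z\to-\infty$, inherited from the exponential convergence of the reaction profile $\bar Y(z)\to 0$ established in Section~\ref{s:ZND} (the linearization of \eqref{Yprof} at $\bar Y=0$ has negative spectrum, so $|G(z)-G_-|\le Ce^{\theta z}$ for some $\theta>0$). By Lemma~\ref{lem:splitting}, $G_-$ has unstable subspace of rank $2+r$ and hence stable subspace of complementary dimension $(3+r)-(2+r)=1$ on $\R\lambda>0$, with a spectral gap. I would then invoke the Conjugation Lemma (Métivier--Zumbrun, or the Gap Lemma of Gardner--Zumbrun / Kapitula--Sandstede) to produce, on a neighborhood of any $\lambda$ with $\R\lambda>0$, an invertible change of coordinates analytic in $\lambda$ conjugating $Z'=G(z)Z$ on $(-\infty,0]$ to the constant-coefficient system $Z'=G_-Z$. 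Pulling back the $(r+2)$-dimensional decaying subspace of the limiting system (which decays exponentially, again by the spectral gap, since here "decaying as $z\to-\infty$" means lying in the \emph{unstable} subspace of $G_-$) gives the asserted analytic, exponentially decaying subspace $\Span\{Z_1^+,\dots,Z_{r+2}^+\}(\lambda,x)$, tangent as $x\to-\infty$ to the unstable subspace of $G_-$; boundedness forces any bounded solution to lie in this subspace, so it is exactly the bounded solution space. Analyticity in $\lambda$ comes from analytic dependence of $G$ (through $G=(-\lambda A^0+C)(A^1)^{-1}$) and of the spectral projections of $G_-$, the latter being guaranteed by the persistence of the rank splitting in Lemma~\ref{lem:splitting}.

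For the continuous extension to $\R\lambda=0$: Lemma~\ref{lem:splitting} already asserts that the unstable/stable subspaces of $G_\pm$ extend continuously (this is the nontrivial spectral fact, and it is exactly what is needed), and one checks that the exponential convergence rate $\theta$ of the profile is uniform, while the spectral gap of $G_-$ degrades only to a \emph{weak} gap (some eigenvalues may approach the imaginary axis) but does not close faster than $\theta$ — this is the standard "convergence wins over the gap" hypothesis under which the Gap Lemma still applies and delivers a merely continuous (rather than analytic) conjugator up to and including $\R\lambda=0$. I expect the main obstacle to be precisely this last point: verifying that no eigenvalue of $G_-$ crosses into the decaying half-plane or collides with the threshold faster than $e^{\theta z}$ as $\R\lambda\downarrow 0$, so that the $(r+2)$-dimensional tracked subspace neither loses rank nor loses its exponential-decay characterization in the limit. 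In practice this is handled by the explicit block-triangular form of $G_-$ in $(V,Y)$ — the $Y$-block contributes the reaction-rate eigenvalues, which stay uniformly in the open unstable half-plane, and the $V$-block is the linearized nonreactive Euler dispersion relation, whose behavior as $\R\lambda\to 0$ is classical — so I would cite \cite{Er1,Er2,Z1,JLW} for the detailed count and merely note that the hypotheses of the Gap/Conjugation Lemma are met.
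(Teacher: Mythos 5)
Your argument is correct and follows essentially the same route as the paper's proof: the $x>0$ case is immediate from the constancy of $G$ there together with the full-rank unstable subspace of $G_+$ from Lemma \ref{lem:splitting}, and the $x<0$ case is the standard application of the gap/conjugation lemmas of \cite{GZ,KS,MZ1} using the exponential convergence $G(z)\to G_-$ and the rank-$(2+r)$ unstable subspace of $G_-$ (correctly identified as the subspace of solutions decaying as $x\to-\infty$), with the continuous extension to $\R\lambda=0$ inherited from the continuity of the limiting subspaces asserted in Lemma \ref{lem:splitting}. The paper gives only this sketch and defers details to \cite{JLW,Z3,Z6}; your write-up fills in the same steps in slightly more detail.
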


\begin{proof}
The first observation is immediate, using the fact that $G$
is constant for $x>0$.
The second follows from asymptotic ODE theory, 
using the ``gap'' or ``conjugation'' lemmas of \cite{GZ,KS}, \cite{MZ1}
together with the fact that $G$ decays exponentially
to its end state as $x\to -\infty$.
See \cite{JLW,Z3,Z6} for details.
\end{proof}

\begin{definition}\label{evans}
We define the {\it Evans--Lopatinski determinant}
\begin{equation}\label{evanseq}
\begin{aligned}
D(\lambda)&:=
\det \begin{pmatrix}
Z_1^-(\lambda,0), & \cdots, & Z_{r+2}^-(\lambda,0), &
\lambda [F^0(\bar W)]-[A^1 \bar W'(x)]\\
\end{pmatrix}\\
&=
\det \begin{pmatrix}
Z_1^-(\lambda,0), & \cdots, & Z_{r+2}^-(\lambda,0), &
\lambda[F^0(\bar W)]+A^1\bar W'(0^-)\\
\end{pmatrix},\\
\end{aligned}
\end{equation}
where $Z^-_j(\lambda,x)$ are as in Corollary \ref{cor:splitting}.
\end{definition}

The function $D$ is exactly the {\it stability function} 
derived in a different form by Erpenbeck \cite{Er2};
see Section \ref{s:erp} below.
The formulation \eqref{evanseq} is of the standard form arising
in the simpler context of (nonreactive) shock stability \cite{Maj2, Er1}.
Evidently (by \eqref{eigRH} combined with Corollary \ref{cor:splitting}), 
$\lambda$ is a generalized eigenvalue/normal mode
for $\R \lambda \ge 0$ if and only if $D(\lambda)=0$.

\begin{remark}\label{Krmk}
As noted in \cite{Z3,Z6}, consideration of 
the traveling-wave equation $F(W)'=AW'=R(W)$ yields the simpler formula
\begin{equation}\label{simple}
D(\lambda)=
\det \begin{pmatrix}
Z_1^-(\lambda,0), & \cdots, & Z_{r+2}^-(\lambda,0), &
\lambda[F^0(\bar W)]+R(\bar W(0^-))\\
\end{pmatrix}.
\end{equation}
\end{remark}

\subsection{Dual formulation}

The $(n+r)\times (n+r)$ determinant \eqref{simple}
may be expressed more 
succinctly 
in dual form
\begin{equation}\label{dualevans}
\begin{aligned}
D(\lambda)&=
\tilde Z^-(\lambda, 0) \cdot (\lambda[F^0(\bar W)]+R(\bar W(0^-)),
\end{aligned}
\end{equation}
where $\tilde Z^-(\lambda, x)$ is the cross product
$Z_1^-\wedge  \dots \wedge  Z_{r+2}^-(\lambda,x)$ defined by 
\[
\tilde Z^-\cdot x=\det \begin{pmatrix} Z_1^-,& \cdots, & Z_{r+2}^-, & x \end{pmatrix}.
\]
The vector $\tilde Z^-$ may alternatively be characterized
directly as the unique up to constant factor
bounded solution on $x\le 0$ of the adjoint ODE
\begin{equation}\label{dualeig}
\begin{aligned}
\tilde Z' = -G^*\tilde Z,
\end{aligned}
\end{equation}
which, as $x\to -\infty$ is both exponentially decaying and tangent 
to the corresponding exponentially decaying one-dimensional subspace of
bounded solutions of the limiting constant-coefficient equations
$\tilde Z' = -G^*_-\tilde Z$.
It may be specified analytically in $\lambda$ by the additional requirement 
\begin{equation}\label{abinit}
\tilde \Pi (\tilde Z^-)^{conj}(-M)=\ell(\lambda),
\end{equation}
$M>0$, where $\ell$ is an analytically chosen left eigenvector
of $G_-(\lambda)$ associated with the unique eigenvalue $g_-(\lambda)$
of negative real part and $\tilde \Pi$ the associated eigenprojection.
Here, and elsewhere, $^{conj}$ denotes complex conjugate.
By \eqref{abinit} together with the tangency property, 
$\tilde Z^-$ is well-approximated at $x=-M$, 
for $M>0$ sufficiently large, by
\begin{equation}\label{numinit}
\tilde Z^-(-M)=\ell^{conj}(\lambda).
\end{equation}

This reduces 
the approximate
evaluation of $D(\cdot)$ to the straightforward and
extremely well-conditioned numerical problem of integrating a 
single
exponentially growing (in forward direction) mode from $x=-M$ to $x=0$.
The stability of the computation derives from the fact that
errors lying in other, exponentially decaying modes, are
exponentially damped \cite{Z5}.

\medskip
{\bf Alternate initialization.}
Alternatively, following \cite{Br1,Br2,BrZ,BDG},
$\tilde Z^-$ may be specified by boundary conditions at $-\infty$, via
\begin{equation}\label{infabinit}
\lim_{x\to -\infty}
e^{g_-^{conj}x} 
\tilde Z^-(x)=\ell^{conj}(\lambda),
\end{equation}
whence \eqref{numinit} becomes 
\begin{equation}\label{infnuminit}
\tilde Z^-(-M)=
e^{-g_-^{conj}M} \ell^{conj}(\lambda).
\end{equation}
This 
is the method that we 
prescribe here.
It 
has the advantage of removing the dependence 
of $\tilde Z^-$ on the artificial parameter $M$,
allowing the flexible choice of $M$ in different parameter regimes,
as dictated by numerical considerations, while preserving
analyticity.
However, in practice, there is usually not much difference between 
\eqref{numinit} and \eqref{infnuminit}.
In particular, if, as in \cite{LS}, one is not interested in analyticity,
then one may vary $M$ freely in \eqref{numinit} as well.

\section{Relations to other methods}\label{s:relations}
\subsection{ Relation to the method of Lee and Stewart.}
Denoting by $Z_0$ the solution on $x\le 0$ 
of the forward eigenvalue ODE \eqref{eig} with
initial conditions $Z_0(0):= \lambda[F^0(\bar W)]+R( W(0^-))$,
we have by standard duality properties that
\begin{equation}\label{const}
\tilde Z^-\cdot Z_0(\lambda,x)\equiv D(\lambda)
\end{equation}
is independent of $x\le 0$,
or $( \tilde Z^-\cdot Z_0)'(\lambda,x)\equiv 0$.
 Taking $x=-M$ and recalling \eqref{numinit},
we arrive at the alternative Evans--Lopatinski approximation 
\begin{equation}\label{LSapprox}
D(\lambda)\sim  \ell^{conj}(\lambda) \cdot Z_0(\lambda,-M)
\end{equation}
used by Lee and Stewart \cite{LS}, where $\ell^{conj}\cdot Z_0(-M)=0$ 
is their ``nonradiative condition'' enforcing boundedness of $Z_0$.
The solution of $Z_0$ from $x=0$ to $x=-M$, on the other hand,
is numerically 
comparatively
ill-conditioned in the vicinity of roots
of $D(\cdot)$, since $Z_0$ in this regime is approximately 
exponentially decaying in the backward direction while errors
are exponentially growing.\footnote{
More precisely, they solve the inhomogeneous equations
$Z_0'=GZ_0+ \lambda \bar W'(x)$ 
with initial data $\hat Z_0(0):=\lambda [\bar W]$,
and compute 
$\ell^{conj}(\lambda) \cdot Z_0(\lambda,-M) \sim
\ell^{conj}(\lambda) \cdot \hat Z_0(\lambda,-M) $,
which is numerically equivalent.
Here we are using $\hat Z_0- Z_0= \bar W'(x)\to 0$ as $x\to -\infty$.}
The version \eqref{dualevans} is therefore much preferable from
the numerical point of view, at least when used (as here, and in \cite{LS})
as a shooting method.

\subsection{Relation to the method of Erpenbeck}\label{s:erp}
Erpenbeck \cite{Er3} computes $\tilde Z^-$ in much the same
way as we do here. 
However, in place of the homogeneous duality relation \eqref{const},
he uses the ``inhomogeous Abel relation''
\begin{equation}\label{abel}
 (\tilde Z^-\cdot \hat Z_0)'(\lambda,x)) = \tilde Z^\cdot \lambda \bar W'(x),
\end{equation}
valid for the solution $\hat Z_0$ of the inhomogeneous equation
$Z'=GZ+ \lambda \bar W'(x)$ with initial data $\hat Z_0(0):=\lambda [\bar W]$
deriving from the unmodified equations \eqref{lin}--\eqref{linRH},
together with $\bar W'(-\infty)=0$, to evaluate
$$
D(\lambda)= \int_{-\infty}^0 \tilde Z^-(y)\cdot\lambda \bar W'(y) dy
+ \tilde Z^-(0)\cdot \lambda [\bar W].
$$
Though it is mathematically equivalent to the homogeneous scheme described
above, this  has the disadvantage that it is difficult to implement adaptive
control on truncation error simultaneously for the ODE and quadrature
steps.
Indeed, the method is in general a bit more cumbersome to implement
and understand than either of the previous two described methods.
As a one-time cost, the latter is a rather minor point. 
However, the implications of the former for performance appear
to be significant.
Our experience in similar Evans function-type shooting computations 
\cite{BrZ,HuZ2,BHRZ,HLZ} of spectra of asymptotically constant-coefficient
operators
is that a fixed-step scheme can be orders of magnitude slower 
than a comparable adaptive scheme; see \cite{Z5} for a general 
discussion of performance of numerical Evans/Lopatinski solvers.
Moreover, even in the solution of $\tilde Z$ alone, the use
of an adaptive solver without factoring out expected decay is
much less effective in our experience (``numerical pitfall 3'' of \cite{Z5}).

\subsection{Expression as boundary-value solver}
We mention in passing an alternative ``local Evans function'' formulation in the spirit of \cite{LS}, suggested by Sandstede \cite{S2} as a general method for numerical Evans function investigations using {\it collocation/continuation} rather than shooting.  By the analysis of the previous subsections, we may recast the eigenvalue equation \eqref{eig}--\eqref{eigRH}
as in \cite{LS} as an overdetermined two-point boundary-value problem
$Z'=GZ$ with $r+4$ boundary conditions
\begin{equation}\label{bvp}
Z(0):= \lambda[F^0(\bar W)]+R(\bar W(0^-)),
\quad
\lim_{x\to -\infty} \ell^{conj}\cdot Z(x)=0.
\end{equation}
Relaxing at random one of the $r+3$ conditions at $x=0$,
say the requirement on the $j$th coordinate,
we generically obtain a well-posed boundary-value problem
with the correct number $r+3$ of boundary conditions;
one of the coordinates will always suffice.
More, the projective boundary-condition at $x=-\infty$
is numerically ``correct'', making this problem 
extremely well-conditioned for solution by collocation/continuation methods
(see, e.g., \cite{S1}).
Defining $Z(\lambda,x)$ to be the solution of this relaxed problem,
we may then define a local, analytic Evans function
\[
\tilde D(\lambda):= e_j\cdot (Z(\lambda,0)- 
(\lambda[F^0(\bar W)]+R(\bar W(0^-))))
\]
that is numerically well-conditioned and
vanishes if and only if $\lambda$ is an eigenvalue.
This gives a second way to convert \eqref{bvp} into a numerically
well-conditioned problem, though the speed and simplicity
of shooting is lost in this approach, along with global analyticity
useful for winding number calculations.
We shall not investigate this method here, but note that it could
be useful in extreme conditions such as the ultra-high activation
energy limit \cite{BN}.


\section{Numerical implementation}\label{s:algorithm}

We now describe in detail the numerical algorithm
proposed to compute \eqref{dualevans}, following the general
approach set out in \cite{BrZ,HuZ2,Z4,Z5}.

\subsection{Computing the profile}\label{init}

In Evans function computations, a delicate aspect
is often the computation of the background nonlinear profile.
We sidestep this issue by the explicit solution technique used in
\cite{Er1,Er2,LS}, modified slightly to accomodate the multi-species case
(specifically, the simplified uniform ignition one considered here).

Introducing the new variable $y$ defined by
\begin{equation}\label{yeq}
 dy/dx = m^{-1}\rho \varphi(T), \quad y(0)=0,
\end{equation}
where $m:= (\rho (s-u))_\pm$,
we reduce the reaction equation \eqref{Yprof} to
\begin{equation}\label{Yseq}
 dY/dy= K Y, \quad  Y(0)=Y_0,
\end{equation}
obtaining an explicit solution
\begin{equation}\label{Yform}
Y(y)= e^{Ky}Y_0
\end{equation}
from which the full profile can be recovered through \eqref{finalgRH},
either by explicit calculation, as carried out for ideal gas dynamics in
Appendix \ref{B}, or, more generally, by Newton iteration.

\begin{remark}\label{general}
In the single-species case, \eqref{Yseq} reduces to
the change of coordinates $x\to y:=\log Y$ used in \cite{LS};
general, nonlinear rate laws, or Chapman--Jouget waves, may be accomodated
by a change of variables 
$x\to Y^{r}$ for appropriate $r$, as discussed in \cite{LS}.
\end{remark}

\subsection{Computing the stability determinant}\label{stab}

The linearized stability analysis can then be carried out in
the variable $y$ defined in \eqref{yeq}, 
using the instantaneous change of variables formula
\begin{equation}\label{inst}
 dx/dy = m/(\bar \rho \varphi(\bar T)), \quad y\le 0.
\end{equation}

\begin{remark}\label{equiv}
Since the righthand side of \eqref{inst} is uniformly positive and
bounded, the variables $x$ and $y$ are equivalent in the
sense that $Cx\le y\le x/C$ for $x$, $y\le0$, for some $C>0$.
\end{remark}

Specifically, we solve from $y=-M$ to $y=0$ the ODE
$
(d/dy)\tilde Z = 
 -m/(\bar \rho \varphi(\bar T)) G^*(y)\tilde Z,
$
with initial condition 
$\tilde Z^-(-M)= e^{-g_-^{conj}(\lambda) M} \ell^{conj}(\lambda)$,
$M>0$ sufficiently large, where the vector $\ell(\lambda)$ 
and limiting eigenvalue $g_1=(u_-+c_-)^{-1}$
are as computed in eqs. \eqref{genellY} and \eqref{finalellV} of 
Appendix \ref{A}, 
the coefficient $G(\lambda, \bar V, \bar Y)$ 
is as described in 
eqs. \eqref{G0}, \eqref{coeffs}, and \eqref{abcoefs}--\eqref{abdefs},
and the profile $(\bar V, \bar Y)(y)$ 
is as computed in Appendix \ref{B}.
As prescribed in \eqref{dualevans},
we may then compute the stability determinant
$
D(\lambda) = \tilde Z^-(\lambda, 0) \cdot (\lambda[F^0(\bar W)]+
R(\bar W'(0^-))).
$

More precisely, we may solve the numerically more advantageous equations
\begin{equation}\label{2ydualeig}
\begin{aligned}
(d/dy)\hat Z = 
 -(m/\bar \rho \varphi(\bar T)) (G(y)+g_-(\lambda)I)^*\hat Z,
\end{aligned}
\end{equation}
with initial conditions
$\hat Z(y):= e^{
 -(m g_-^{conj} /\bar \rho \varphi(\bar T) y}\tilde Z(y)$,
and compute
\[
D(\lambda) = \hat Z^-(\lambda, 0) \cdot (\lambda[F^0(\bar W)]+R(\bar W(0^-))).
\]
This may readily be computed with good results by 
an adaptive solver such as the standard RK45; see
\cite{BrZ,HuZ2,Z5} for further discussion.

\subsection{Determination of stability: winding number vs. stability curves}\label{winding}

With an Evans solver in hand, stability may be checked either
by winding number computations as in \cite{Er3,BHRZ,HLZ}, or by root-following
methods based on the Implicit Function Theorem, as in \cite{LS}.
In the first method, a large semicircle $S$ centered at the origin and
lying in $\Re \lambda\ge 0$ is mapped by $D$, and the number of zeros
of $D$ (unstable normal modes) lying within $S$
computed using the principle of the argument, making use of the underlying
analyticity of $D$.
Unstable modes lying outside $S$ may be excluded by a separate, asymptotic,
argument based on high-frequency behavior of $D$ \cite{Br1,Br2,HLZ};
for implementations in the context of ZND, see
\cite{Z6,LWZ} (analytical) or \cite{BZ,BZ2} (numerical).
In the second method, individual roots are followed, avoiding the need
to compute around a contour, but typically requiring
an extra Newton iteration with each change in model parameters;
see, for example, \cite{LS,ShS}.
Both are by now completely standard.


\section {A simple model problem}\label{s:simplemodel}

We conclude by an examination of efficiency within
the context of a simple but illustrative model problem.
Consider the ODE
\begin{equation}\label{e:model}
y' = A(x,\lambda) y, \qquad
A(x,\lambda) = \lambda \begin{pmatrix} \frac{1}{2} & 0\\ 
\frac{1}{c}e^{2x} & \frac{-1}{2} \end{pmatrix}
\end{equation}
defined on $-\infty<  x\le 0$, $x\in \mathbb{R}$, $\lambda \in \mathbb{C}$, 
$y\in \mathbb{C}^2$,
with boundary conditions 
$y\sim e^{\lambda x/2}(1,0)^T$ as $x\to -\infty$
 and $y(0)=(1,0)^T$,
modeling a variable-coefficient eigenvalue problem of the form arising in ZND,
where the coefficient $c\ne 0$ encodes 
rapidity of exponential decay.
As for ZND, the coefficient matrix is exponentially asymptotically
constant as $x\to -\infty$, with size growing linearly in $\lambda$,
and has a unique decaying mode as $x\to -\infty$ for all $\Re \lambda>0$,
extending continuously to $\Re \lambda=0$.
Thus, we may expect somewhat similar behavior, 
at least away from the high-activation energy ``square-wave'' regime.

In this context, our proposed algorithm consists of factoring out
the expected decay $e^{\lambda x/2}$ from the solution to obtain
a ``neutral'' equation
\begin{equation}\label{e:modmodel}
\hat y' = \hat A(x,\lambda) \hat y, \qquad
\hat A(x,\lambda) = \lambda \begin{pmatrix} 0 & 0\\ 
\frac{1}{c}e^{2x} & -1\end{pmatrix},
\end{equation}
$y:= \hat ye^{\lambda x/2}$, then solving \eqref{e:modmodel}
from $x=-M$ to $x=0$ and checking whether $\hat y(0)$
lies parallel to $(1,0)^T$.  
For reasonable values of $c$, a computational domain of $M=5$ is sufficient.
The method of Lee-Stewart, consists roughly of integrating the original
equation \eqref{e:model} from $x=0$ to $x=-M$;
the method of Erpenbeck consists roughly of integrating \eqref{e:model}
from $x=-M$ to $x=0$ without first factoring out expected exponential decay.
For comparison, we considered also a worst-case scenario with maximum
amplification of error modes, integrating
\eqref{e:modmodel} from $x=0$ to $x=-\infty$.

We computed all with the adaptive-mesh RK45 algorithm (ode45) supported
in MATLAB,\footnote{
In practice, faster than corresponding fixed-mesh methods \cite{Z5,BZ,BZ2}.}
with error tolerance set at the standard level $10^{-5}$ used
for Evans computations \cite{HLZ,BHZ,BLZ,BLeZ},
measuring efficiency by the number of mesh points/function calls 
required to complete the computation.
Extreme cases are $\lambda$ real- the ``best'' case,
with a spectral gap between exponentially growing and 
exponentially decaying modes at $-\infty$-
and $\lambda $ imaginary- the ``worst'' case from our standpoint, 
with neither spectral gap nor exponential decay.
From the standpoint of the Lee-Stewart method, the best and worst cases
would appear to be reversed.

The results, displayed in Tables 1 and 2 for a typical value $c=10$,
indicate that the proposed new algorithm performs $1$-$5$ times faster
than (adaptive versions of) either the Erpenbeck or Lee-Stewart methods,
depending on the value of $\lambda$, with particular improvement
as $|\lambda|$ becomes large.
%
%
%
It should be noted, moreover, that this is only a comparison of {\it speed}
(number of mesh points) for the various methods to produce output with
fixed truncation error.
If we consider also {\it accuracy}, i.e., convergence
error, then the results could be expected to be more
dramatic, since both Lee-Stewart and Erpenbeck methods
are numerically less well-posed than the forward ``neutral''
algorithm that we propose.


%
%

\begin{table}
\begin{center}
\begin{tabular}{| r | c | c | c | c | c | c |}\hline
& \multicolumn{6}{c|}{mesh points} \\
\hline
& \multicolumn{3}{c|}{forward integration} &  \multicolumn{3}{c|}{backward
integration}\\
\hline
$\lambda$ & $c=10$ & 100 & 1000 & $c=10$ & 100 & 1000\\
\hline
1.0+ 0i& 19 & 14 & 12 & 26 & 24 & 19 \\
4.0+ 0i& 43 & 29 & 19 & 94 & 92 & 88 \\
16.0+ 0i& 107 & 76 & 51 & 363 & 361 & 357 \\
64.0+ 0i& 261 & 191 & 138 & 1438 & 1436 & 1432 \\
256.0+ 0i& 657 & 519 & 427 & 3177 & 3186 & 3192 \\
\hline
0.4+ 0i& 14 & 12 & 11 & 17 & 14 & 11 \\
0.4+ 1i& 17 & 13 & 12 & 30 & 27 & 18 \\
0.4+ 4i& 43 & 29 & 19 & 100 & 97 & 73 \\
0.4+16i& 111 & 77 & 51 & 385 & 382 & 296 \\
0.4+64i& 317 & 224 & 177 & 1528 & 1523 & 1185 \\
0.4+256i& 1088 & 870 & 827 & 6104 & 6086 & 4738 \\
\hline
\end{tabular}
\caption{Runs for Eq. \eqref{e:modmodel}.
Forward corresponds to our proposed method, with expected decay
factored out.  Backward is a worst-case scenario not corresponding
to any of the methods considered.}
\label{comparison}
\end{center}
\end{table}

%
%

\begin{table}
\begin{center}
\begin{tabular}{| r | c | c | c | c | c | c |}\hline
& \multicolumn{6}{c|}{mesh points} \\
\hline
& \multicolumn{3}{c|}{forward integration} &  \multicolumn{3}{c|}{backward
integration}\\
\hline
$\lambda$ & $c=10$ & 100 & 1000 & $c=10$ & 100 & 1000\\
1.0+ 0i& 23 & 19 & 15 & 19 & 17 & 15 \\
4.0+ 0i& 61 & 58 & 56 & 52 & 50 & 49 \\
16.0+ 0i& 181 & 181 & 181 & 186 & 184 & 183 \\
64.0+ 0i& 719 & 719 & 719 & 723 & 721 & 721 \\
256.0+ 0i& 2868 & 2868 & 2868 & 2873 & 2871 & 2870 \\
\hline
0.4+ 0i& 16 & 13 & 12 & 17 & 13 & 12 \\
0.4+ 1i& 20 & 17 & 15 & 20 & 17 & 15 \\
0.4+ 4i& 55 & 52 & 50 & 54 & 52 & 50 \\
0.4+16i& 196 & 194 & 193 & 197 & 195 & 193 \\
0.4+64i& 765 & 765 & 765 & 775 & 771 & 765 \\
0.4+256i& 3055 & 3055 & 3055 & 3084 & 3074 & 3055 \\
\hline
\hline
\end{tabular}
\caption{Runs for Eq. \eqref{e:model}.
Forward corresponds to Erpenbeck method, backward to Lee--Stewart method.}
\label{comparison2}
\end{center}
\end{table}


\appendix
\section{Calculation of $\ell$}\label{A}
In this appendix, we show how to calculate for general equations of state
the initializing vector $\ell(\lambda)$
used in \eqref{numinit}, the unique stable left
eigenvector of the limiting coefficient matrix 
\begin{equation}\label{Gminus}
\begin{aligned}
G_-&= (-\lambda A^0_- +C_-) (A^1_-)^{-1}
=
\begin{pmatrix}
-\lambda f^0_{V-} (f^1_{V-})^{-1} & 
(\lambda f^0_{V-}(f^1_{V-})^{-1}f^1_{Y-} + QK\psi_-)(g^1_-)^{-1}
\\
 \\
0 & 
(-\lambda g^0_- -K\psi_-) (g^1_-)^{-1}
\end{pmatrix},
\end{aligned}
\end{equation}
where for a general function $h(V,Y)$, we use $h_-$ to denote $h(V_-,Y_-)$.  Here, we have
strongly used $Y_-=0$ to obtain the simple upper block-triangular form.

By the upper block-triangular form of $G_-$, and the fact that the lower
right-hand block has spectrum of positive real part for $\R \lambda >0$
(since $g^0>0$ always, $g^1<0$ for right-moving detonations, and
$-K$ is assumed to have spectrum of negative real part),
we find that $\ell^T$ must be of form $(\ell_V^T, \ell_Y^T)$, where
$\ell_V$ is the unique unstable eigenvector, associated with
eigenvalue $\alpha$, of the purely gas-dynamical matrix
$ f^0_{V-} (f^1_{V-})^{-1} $, and 
\begin{equation}
\label{genellY}
\ell_Y^T= \ell_V^T (\lambda f^0_{V-}(f^1_{V-})^{-1}f^1_{Y-} + QK\psi_-) (\lambda( g^0_-- \alpha g^1_-)I +K\psi_-)^{-1}.
\end{equation}

To determine $\alpha$, $\ell_V$, and thereby $\ell_Y$,
we observe that $ f^0_{V-} (f^1_{V-})^{-1} $, 
is related by similarity tranform $M\to (f^0_{V-})^{-1}M f^0_{V-}$
to the inverse $(f^1_{V-})^{-1} f^0_{{V-}}$ of the hyperbolic
convection matrix $(f^0_{V})^{-1}f^1_V$
of the nonreactive Euler equations 
$
V_t+ (f^0_{V})^{-1}f^1_VV_x=0
$
written in nonconservative form in $V$ coordinates with $Y\equiv 0$.
Thus, $\alpha^{-1}$ is an eigenvalue of $(f^0_{V})^{-1}f^1_V$,
i.e., a hyperbolic characteristic speed of the non-reactive Euler equations,
and $\ell_V^T= \tilde \ell^T (f^0_{V-})^{-1}$, where $\tilde\ell$
is the associated left characteristic direction (eigenmode).

Noting that $\alpha^{-1}$, as the unique positive characteristic
at state $V=V_-$, must be the largest characteristic speed,
we have by standard formulae \cite{Sm,Se1,Se2,LZ1,Z2} or direct calculation
\begin{equation}\label{finalellV}
\ell_V^T= \tilde \ell^T (f^0_{V-})^{-1}=
( p_\rho-cu+ \rho^{-1}p_e(u^2/2-e), c- \rho^{-1} p_eu, \rho^{-1} p_e),
\end{equation}
determining $\ell^T(\lambda)=(\ell_V^T,\ell_Y^T)(\lambda)$
through \eqref{genellY}.
Note that $\ell_V$ is independent of $\lambda$.
For $Y$-independent equations of state, \eqref{genellY}
simplifies considerably, to
$
\ell_Y^T= \ell_V^T QK\psi_- (\lambda( g^0_-- \alpha g^1_-)I +K\psi_-)^{-1}.
$

\begin{remark}\label{quickell}
Noting that the $e$-component $\rho^{-1}p_e$ of $\ell_V$ does not
vanish in \eqref{finalellV}, we may alternatively rescale
by $\rho/p_e$ to obtain an analytic choice of form
$\ell^T=(*,*,1,*)$ convenient for numerical solution.
\end{remark}


\subsection{Alternative, numerical computation}
Alternatively, an analytic choice of $\ell$ may be determined
numerically by solution of Kato's ODE \cite{K}
as described in \cite{BrZ,HuZ2,Z4,Z5}.  
For $\Re \lambda$ bounded from zero, this
involves finding numerically at each $\lambda$-value
the unique stable left and right eigenvectors of $G_-$
and computing the associated eigenprojection for use in
the Kato ODE as in the general problem-independent
method of \cite{BrZ,HuZ2,Z4,Z5}.
At or near $\Re \lambda =0$, however,
this method must be modified, since the stable eigenvector becomes
neutral at $\Re \lambda=0$.
A simple resolution is to notice that, there, the eigenvalues of
$G_-$ consist of a single eigenvalue with strictly positive real
part, which may be discarded, and three eigenvalues
of form $g_j=\alpha_j \lambda$, where $\alpha_j$ (see above) 
are hyperbolic characteristic speeds for the non-reactive Euler equations,
of which the one for which $g_j/\lambda=\alpha_j<0$ is the one 
associated with $\ell$.




\section{Ideal gas profile}\label{B}
In this appendix, we explicitly solve \eqref{finalgRH} for the case of an ideal gas.  Restricting to a steady shock, $s=0$, and using the ideal gas law \eqref{ideal}, we may rewrite \eqref{finalgRH} as 
\begin{equation}
\label{solvegRH}
\begin{aligned}
\bar{\rho}\bar{u}&= \rho_\pm  u_\pm := -m\\
\bar{u} + \Gamma \frac{\bar{e}}{\bar{u}}&= u_\pm + \Gamma \frac{e_\pm}{u_\pm} := b \\
\frac{\bar{u}^2}{2} + (\Gamma+1)\bar{e} + q \bar{Y} &= \frac{u_\pm^2}{2} + (\Gamma+1) e_\pm + q Y_\pm := c.
\end{aligned}
\end{equation}
Combining the second two equations and simplifying gives
$
(\Gamma+2) \bar{u}^2 - 2 (\Gamma+1) b \bar{u} + 2 \Gamma (c - q \bar{Y}) = 0.
$
Solving using the quadratic formula, we obtain
\begin{equation}
\label{solved}
\begin{aligned}
\bar{\rho} &= -\frac{m}{\bar{u}},\quad
\bar{e}= \frac{b \bar{u} - \bar{u}^2}{\Gamma}, \quad
\bar{u}= \frac{\Gamma+1}{\Gamma+2}b \pm \sqrt{\left(  \frac{\Gamma+1}{\Gamma+2} \right)^2 b^2 + \frac{2 \Gamma (q \bar{Y} -c) }{\Gamma+2} },
\end{aligned}
\end{equation}
where we have chosen the negative solution branch for $\bar u$ in
accordance with the fact that $[u]>0$, or, equivalently, $[\rho]<0$,
for a right-moving gas-dynamical shock, so that $\bar u(0^-)<u_+$.
(Recall that $\bar u(0^-)$ and $u_+$ are
the two branches of the square root for $Y=1$, corresponding to the
solutions of the Rankine--Hugoniot conditions for a nonreacting
gas-dynamical shock.)
With \eqref{Yform}, \eqref{solved} gives an explicit expression
for the profile as a function of variable $y$.


For a given Neumann shock, there is a one-parameter
family of possible endstates $(\rho, u, e)_-$ determined by
the value of $q$,
the maximum value of $q$ corresponding to a Chapman--Jouget wave,
for which the argument of the square root vanishes for $y=0$.


\end{document}